\newtheorem{theorem}{Theorem}[section]
\newtheorem{lemma}[theorem]{Lemma}
\newtheorem{proposition}[theorem]{Proposition}
\newtheorem{definition}[theorem]{Definition}
\newtheorem*{theorem*}{Theorem}
\newtheorem*{lemma*}{Lemma}
\newtheorem*{remark*}{Remark}
\newtheorem*{definition*}{Definition}
\newtheorem*{proposition*}{Proposition}
\newtheorem*{corollary*}{Corollary}
\numberwithin{equation}{section}
\newcommand{\real}{\mathbb{R}}
\let\ced=\c         
\def\qed{\,\unskip\kern 6pt \penalty 500
\raise -2pt\hbox{\vrule \vbox to8pt{\hrule width 6pt
\vfill\hrule}\vrule}\par}
\definecolor{darkblue}{rgb}{0.05, .05, .65}
\definecolor{darkgreen}{rgb}{0.1, .65, .1}
\definecolor{darkred}{rgb}{0.8,0,0}
\newcommand{\beqn}{\begin{equation}}
\newcommand{\eeqn}{\end{equation}}
\newcommand{\bear}{\begin{eqnarray}}
\newcommand{\eear}{\end{eqnarray}}
\newcommand{\bean}{\begin{eqnarray*}}
\newcommand{\eean}{\end{eqnarray*}}
\begin{document}

\title{\huge \bf Large time behavior for a quasilinear diffusion equation with weighted source}

\author{
\Large Razvan Gabriel Iagar\,\footnote{Departamento de Matem\'{a}tica
Aplicada, Ciencia e Ingenieria de Materiales y Tecnologia
Electr\'onica, Universidad Rey Juan Carlos, M\'{o}stoles,
28933, Madrid, Spain, \textit{e-mail:} razvan.iagar@urjc.es},\\
[4pt] \Large Marta Latorre\,\footnote{Departamento de Matem\'{a}tica
Aplicada, Ciencia e Ingenieria de Materiales y Tecnologia
Electr\'onica, Universidad Rey Juan Carlos, M\'{o}stoles,
28933, Madrid, Spain, \textit{e-mail:} marta.latorre@urjc.es},
\\[4pt] \Large Ariel S\'{a}nchez,\footnote{Departamento de Matem\'{a}tica
Aplicada, Ciencia e Ingenieria de Materiales y Tecnologia
Electr\'onica, Universidad Rey Juan Carlos, M\'{o}stoles,
28933, Madrid, Spain, \textit{e-mail:} ariel.sanchez@urjc.es}\\
[4pt] }
\date{}
\maketitle

\begin{abstract}
Large time behavior of general solutions to a class of quasilinear diffusion equations with a weighted source term
$$
\partial_tu=\Delta u^m+\varrho(x)u^p, \quad (x,t)\in\real^N\times(0,\infty),
$$
with $m>1$, $1<p<m$ and suitable functions $\varrho(x)$, is established. More precisely, we consider functions $\varrho\in C(\real^N)$ such that
$$
\lim\limits_{|x|\to\infty}(1+|x|)^{-\sigma}\varrho(x)=A\in(0,\infty),
$$
with $\sigma\in(\max\{-N,-2\},0)$ such that $L:=\sigma(m-1)+2(p-1)<0$. We show that, for all these choices of $\varrho$, solutions with initial conditions $u_0\in C(\real^N)\cap L^{\infty}(\real^N)\cap L^r(\real^N)$ for some $r\in[1,\infty)$ are global in time and, if $u_0$ is compactly supported, present the asymptotic behavior
$$
\lim\limits_{t\to\infty}t^{-\alpha}\|u(t)-V_*(t)\|_{\infty}=0,
$$
where $V_*$ is a suitably rescaled version of the unique compactly supported self-similar solution to the equation with the singular weight $\varrho(x)=|x|^{\sigma}$:
$$
U_*(x,t)=t^{\alpha}f_*(|x|t^{-\beta}), \qquad \alpha=-\frac{\sigma+2}{L}, \quad \beta=-\frac{m-p}{L}.
$$
This behavior is an interesting example of \emph{asymptotic simplification} for the equation with a regular weight $\varrho(x)$ towards the singular one as $t\to\infty$.
\end{abstract}

\

\noindent {\bf Mathematics Subject Classification 2020:} 35B36, 35B40, 35K57, 35K65.

\smallskip

\noindent {\bf Keywords and phrases:} reaction-diffusion equations, global solutions, Hardy-H\'enon equations, large time behavior, asymptotic simplification, grow-up.

\section{Introduction}

In this paper we study the large time behavior of solutions to the Cauchy problem associated to the following quasilinear diffusion equation involving a spatially inhomogeneous source term
\begin{equation}\label{eq.gen}
u_t=\Delta u^m+\varrho(x)u^p, \quad (x,t)\in\real^N\times(0,\infty),
\end{equation}
in dimension $N\geq1$ and with exponents $m>1$, $1<p<m$, coupled with an initial condition
\begin{equation}\label{ic}
u(x,0)=u_0(x), \quad x\in\real^N,
\end{equation}
with
\begin{equation}\label{icond}
u_0\in L^{\infty}(\real^N)\cap C(\real^N)\cap L^r(\real^N), \quad u_0\geq0, \quad u_0\not\equiv0,
\end{equation}
for some $r\in(1,\infty)$. The weight $\varrho(x)$ is assumed to be a function satisfying the following conditions
\begin{equation}\label{cond.varrho}
\varrho(x)>0, \quad \varrho\in C(\real^N), \quad \lim\limits_{|x|\to\infty}(1+|x|)^{-\sigma}\varrho(x)=A\in(0,\infty),
\end{equation}
where $\sigma$ satisfies the following condition (which is compatible since $p\in(1,m)$)
\begin{equation}\label{cond.sigma}
\max\{-N,-2\}<\sigma<\sigma_*, \quad \sigma_*:=-\frac{2(p-1)}{m-1}\in(-2,0).
\end{equation}
A significant particular case on which we will give the details of the main technical parts of the analysis is the weight $\varrho(x)=(1+|x|)^{\sigma}$, with $\sigma$ satisfying \eqref{cond.sigma}, and leading to the equation
\begin{equation}\label{eq.reg}
u_t=\Delta u^m+(1+|x|)^{\sigma}u^p, \quad (x,t)\in\real^N\times(0,\infty).
\end{equation}
The main feature of equations such as Eq. \eqref{eq.gen} is the competition between the quasilinear diffusion term and the source term, which is weighted by a function depending on the space variable, pondering thus the strength of the reaction term according to its value at points $x\in\real^N$. It is a well-known fact, along the evolution, the diffusion term preserves the $L^1$-norm of a solution, while the source term increases it. Thus, in some cases, this growth of the $L^1$-norm might produce the phenomenon of finite time blow-up (that is, sudden unboundedness of $x\mapsto u(x,T)$ at some time $T\in(0,\infty)$) of the solutions.

Equations in the form \eqref{eq.gen} have been considered in \cite{BK87, Pi97, Pi98, SU24} for the semilinear case $m=1$, to quote but a few references, and \cite{Su02, SU21} for the quasilinear case $m>1$, restricted to the range $p>m$. Let us specially mention here the seminal paper \cite{AdB91} performing a rather deep study of the basic functional analytic theory of Eq. \eqref{eq.reg}, that is, with $\varrho(x)=(1+|x|)^{\sigma}$. Sharp conditions for local existence and non-existence, together with uniqueness, are established therein and, in fact, the analysis performed in \cite{AdB91} became our motivation for studying the dynamical properties of solutions to Eq. \eqref{eq.reg}, once the local well-posedness had been established. A different special case is taking $\varrho(x)$ as a compactly supported function, a problem addressed in works such as \cite{FdPV06, Liang12, FdP18, FdP22}. It has been seen that there exists $p_0>1$ such that for $p\in(1,p_0)$, any solution is global in time and grows up as $t\to\infty$, in striking contrast with the behavior of the standard reaction-diffusion equation (that is, $\varrho(x)=1$) where any non-trivial, non-negative solution blows up in finite time for $p\in(1,m)$, see \cite[Chapter~IV, Section 3.1]{S4}.

A special case in the general class \eqref{eq.gen}, which is not considered in this paper but it has a strong influence on the results, is represented by equations where $\varrho(x)=|x|^{\sigma}$ with $\sigma\in\real$, that is,
\begin{equation}\label{eq.sing}
u_t=\Delta u^m+|x|^{\sigma}u^p, \quad (x,t)\in\real^N\times(0,\infty),
\end{equation}
Since, on the one hand, the elliptic counterpart of \eqref{eq.sing} has been proposed in \cite[Equation (A.6)]{He73} (and integrated numerically therein) in a model in astrophysics for the stability of stellar systems, while, on the other hand, the linear parabolic equation with singular potential $|x|^{-2}$ has been studied in \cite{BG84} leading to rather striking results of existence or non-existence strongly related to the Hardy inequality and its best constant, equations of the form \eqref{eq.sing} have been more recently referred as \emph{Hardy-H\'enon equation}. In the semilinear case $m=1$ (and $p>1$), the mathematical analysis of the Hardy-H\'enon equations has been developed strongly in recent years, see for example \cite{BSTW17, T20, MS21, HT21, HS21, CIT21, CIT22, CITT24} and references therein. With respect to the quasilinear diffusion $m>1$, a Fujita-type exponent is identified in \cite{Qi98}, while some blow-up rates are established in \cite{AT05}, all them in the range $p>m$.

The study of Eq. \eqref{eq.sing} in the range $1<p<m$, which has no analogous for the semilinear Hardy-H\'enon equation, is very challenging from some points of view. As a result of a larger project of understanding the dynamical properties of solutions to Eq. \eqref{eq.sing}, the authors classified in recent works the solutions in radially symmetric self-similar form to this equation. It has been thus noticed that the sign of the constant
\begin{equation}\label{constL}
L:=\sigma(m-1)+2(p-1)
\end{equation}
plays a fundamental role in the form of such self-similar solutions. Indeed, the range $L<0$ (that is, $\sigma<\sigma_*$) has been analyzed in \cite{IMS23} and self-similar solutions in forward form, global in time, have been constructed. In particular, a very unexpected case of \emph{strong non-uniqueness} of solutions stemming from the trivial initial condition $u_0\equiv0$ came into light in this range, which corresponds exactly to \eqref{cond.sigma}, thus showing that supplementary conditions are needed for well-posedness. In the range $L>0$ (that is, $\sigma>\sigma_*$), the analysis performed in works such as \cite{IS21, ILS24b} shows that typical self-similar solutions present finite time blow-up, while the case $L=0$, that is, $\sigma=\sigma_*$, leads to exponential self-similar solutions (see \cite{ILS24a}).

Based on these observations, the qualitative theory of the quasilinear Hardy-H\'enon equation \eqref{eq.sing} has been developed in \cite{IL24} under the condition $\max\{-2,-N\}<\sigma<0$. In particular, regarding the range $L<0$, which is equivalent to \eqref{cond.sigma}, global existence for data as in \eqref{icond} follows from \cite[Theorem 2.2]{IL24}, while the uniqueness and the comparison principle for solutions to Eq. \eqref{eq.sing} hold true provided that the initial condition satisfies $u_0(x)>0$ for $x\in B(0,\delta)$, for some $\delta>0$, see \cite[Theorem 2.5]{IL24}. Finally, the large time behavior of solutions to Eq. \eqref{eq.sing} with compactly supported data and in the range $L<0$ is given in \cite[Theorem 2.11]{IL24}.

The emphasis of the present work relies on a surprising connection between solutions to Eq. \eqref{eq.gen} and a particular solution to Eq. \eqref{eq.sing} in the range \eqref{cond.sigma}. More precisely, we prove that, as $t\to\infty$, general solutions to the Cauchy problem \eqref{eq.gen}-\eqref{ic} approach a suitably rescaled form of the unique self-similar solution to Eq. \eqref{eq.sing} obtained in \cite{IMS23}. Let us stress here that the restriction \eqref{cond.sigma} is essential for this convergence to hold true. This fact leads to some natural open questions related to the complementary range $\sigma\leq-2$, in which Eq. \eqref{eq.gen} is well posed while Eq. \eqref{eq.sing} is likely to have no bounded solutions at all. After this discussion, we are in a position to state and discuss our main results of asymptotic convergence.

\medskip

\noindent \textbf{Main results.} Let us fix $m>1$, $p\in(1,m)$ and $\sigma$ as in \eqref{cond.sigma}, that is, $L<0$, where $L$ is defined in \eqref{constL}. Let us first recall here that, as established in \cite{IMS23}, there exists a unique radially symmetric self-similar solution to Eq. \eqref{eq.sing}
\begin{equation}\label{SSS}
U_*(x,t)=t^{\alpha}f_*(|x|t^{-\beta}), \quad \alpha=-\frac{\sigma+2}{L}>0, \quad \beta=-\frac{m-p}{L}>0,
\end{equation}
with a compactly supported and decreasing profile $f_*$ being a solution to the differential equation
\begin{equation}\label{SSODE}
(f^m)''(\xi)+\frac{N-1}{\xi}(f^m)'(\xi)-\alpha f(\xi)+\beta\xi f'(\xi)+\xi^{\sigma}f^p(\xi)=0, \quad \xi=|x|t^{-\beta}\in(0,\infty),
\end{equation}
and with initial conditions $f_*(0)>0$, $f_*'(0)=0$. This solution will play a fundamental role in the description of the large time behavior of solutions to Eq. \eqref{eq.gen}, as explained below. As a preliminary fact, it has been established in \cite[Theorem 2.11]{IL24} that any non-negative solution to Eq. \eqref{eq.sing} approaches $U_*$ as $t\to\infty$, in the self-similar scale of time.

Let us fix now $\varrho$ to be a continuous and bounded function satisfying \eqref{cond.varrho}. In order to state our main results, we first give, for the sake of completeness, the notion of weak solution that we consider throughout the paper, inspired from \cite[Definition 2.1]{Su02}.
\begin{definition}[Weak solution]
We say that $u$ is a weak solution to the Cauchy problem \eqref{eq.gen}-\eqref{ic} in $\real^N\times[0,T)$ if $u\geq0$ in $\real^N\times[0,T)$, $u\in C(\real^N\times[0,T))\cap L^{\infty}(\real^N\times(0,T))$ if for any $\tau\in(0,T)$ and any test function $\varphi\in C_0^{2,1}(\real^N\times[0,T))$, the following equality holds true:
\begin{equation}\label{weaksol}
\begin{split}
\int_{\real^N}&u(x,\tau)\varphi(x,\tau)\,dx-\int_{\real^N}u_0(x)\varphi(x,0)\,dx\\
&=\int_0^{\tau}\int_{\real^N}\left[u(x,t)\varphi_t(x,t)+u^m(x,t)\Delta\varphi(x,t)+\varrho(x)u^p(x,t)\varphi(x,t)\right]\,dx\,dt.
\end{split}
\end{equation}
We say that $u$ is a weak subsolution (respectively supersolution) to Eq. \eqref{eq.gen} if the equality sign in \eqref{weaksol} is replaced by $\leq$ (respectively $\geq$).
\end{definition}
Let us also mention here that we use throughout the paper the notation $u(t)$ to design the mapping $x\mapsto u(x,t)$ for $t>0$ fixed.

The first result establishes global existence and finite speed of propagation of solutions to \eqref{eq.gen}.
\begin{theorem}\label{th.fsp}
Let $u_0$ be as in \eqref{icond} and $\varrho$ be as in \eqref{cond.varrho}. Then, there exists a unique weak solution $u$ to the Cauchy problem \eqref{eq.gen}-\eqref{ic}, and this weak solution is global in time, that is, $u\in L^{\infty}(\real^N\times(0,\infty))$. Moreover, if $u_0$ is compactly supported, then $u(t)$ is compactly supported for any $t\in(0,\infty)$.
\end{theorem}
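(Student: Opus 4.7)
The plan is to separate the theorem into three claims (local existence/uniqueness, global $L^\infty$ bound, and finite speed of propagation) and to handle the global character by comparison with a Hardy-H\'enon supersolution of the type already analyzed in \cite{IL24}. A first easy observation is that, since $\varrho\in C(\real^N)$ is positive and $\sigma<0$ in \eqref{cond.varrho}, one has $\varrho\in L^\infty(\real^N)$ with $\varrho(x)\to 0$ as $|x|\to\infty$; this places Eq. \eqref{eq.gen} inside the functional framework covered in \cite{AdB91, Su02}.

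For local existence and uniqueness I would follow the standard porous-medium approximation procedure: mollify the initial data as $u_{0,\e}=u_0*\eta_\e+\e$ so that they become smooth and uniformly positive, replace the degenerate diffusion $\Delta u^m$ by a nondegenerate operator of the form $\dv((\e+u^{m-1})\nabla u)$, and solve the resulting uniformly parabolic Cauchy problem classically. Uniform bounds on any fixed time interval are obtained by comparison with the ODE supersolution $\dot Y=\|\varrho\|_{\infty}Y^p$, and standard Hölder estimates for quasilinear PME-type equations allow passing to the limit $\e\to 0$. Uniqueness of bounded weak solutions and the comparison principle then follow, as in \cite[Theorem~1.1]{AdB91}, from a duality/$L^1$-contraction argument, which applies here because $u\mapsto \varrho(x)u^p$ is locally Lipschitz on bounded intervals and $\varrho$ is bounded.

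The crux is the global $L^\infty$ bound. The key observation is that there is $K>0$ with $\varrho(x)\le K|x|^\sigma$ for all $x\ne 0$: indeed, since $\sigma<0$, one has $|x|^\sigma\ge 1$ on $\{|x|\le 1\}$, where $\varrho$ is bounded; for $|x|\ge 1$ the estimate $(1+|x|)^\sigma\le |x|^\sigma$ combined with \eqref{cond.varrho} yields the claim. Given $u_0$ as in \eqref{icond}, pick a non-negative bump $\theta\in C_c^\infty(\real^N)$ strictly positive on a ball $B(0,\delta)$ and consider the Hardy-H\'enon Cauchy problem
\begin{equation*}
v_t=\Delta v^m+K|x|^\sigma v^p,\qquad v(\cdot,0)=u_0+\theta.
\end{equation*}
By \cite[Theorem~2.2]{IL24} this problem admits a global weak solution $v$, and by \cite[Theorem~2.5]{IL24} the positivity of $v(\cdot,0)$ near the origin yields uniqueness and the comparison principle for \eqref{eq.sing}. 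Because $K|x|^\sigma\ge \varrho(x)$ pointwise and $v(\cdot,0)\ge u_0$, the function $v$ is a (singularly weighted) supersolution of \eqref{eq.gen}; a straightforward adaptation of the comparison argument, combining the two weights by monotonicity, gives $u\le v$ on $\real^N\times(0,\infty)$, whence $u\in L^\infty(\real^N\times(0,\infty))$.

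For finite speed of propagation, if $u_0$ is compactly supported I would either appeal directly to the compactly supported Barenblatt-type supersolutions known for the PME with bounded reaction (using that $p<m$ makes the reaction subcritical with respect to the diffusion, so the classical Knerr-V\'azquez construction survives), or comparably use the Hardy-H\'enon supersolution $v$ built above with a compactly supported $\theta$, for which the compact support of $v(\cdot,t)$ at each time is already known from \cite{IL24}. Either way, $\supp u(\cdot,t)\subset\supp v(\cdot,t)$ is bounded. The step I expect to be most delicate is the comparison in the global bound: since $u$ and $v$ solve equations with different weights (one regular, the other singular at $0$), the comparison principle has to be set up jointly, and the small regularization $\theta$ must eventually be sent to $0$ to recover the sharp statement; this last passage to the limit will rely on the $L^1$-contraction established in the local theory.
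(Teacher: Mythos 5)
Your proposal is correct and follows essentially the same route as the paper: the global bound comes from $\varrho(x)\le K|x|^{\sigma}$ together with comparison against a global solution of the Hardy--H\'enon equation \eqref{eq.sing} (the paper takes $\overline{u}$ solving \eqref{eq.sing} with the same datum $u_0$ and invokes \cite[Theorem 2.2]{IL24}), and finite speed of propagation comes from a compactly supported Hardy--H\'enon supersolution (the paper uses the explicit delayed self-similar solution $U_*(\cdot,\cdot+\tau_{\infty})$ after choosing $\tau_{\infty}$ so that $U_*(\cdot,\tau_{\infty})\ge u_0$). One simplification you could make: the bump $\theta$ and the limit $\theta\to0$ are unnecessary, because the comparison $u\le v$ is run entirely within the well-posed regular equation \eqref{eq.gen} --- where $v$ is merely a bounded supersolution and \cite[Proposition 2.2]{Su02} applies since $\varrho$ is bounded --- so you never need uniqueness or a comparison principle for the singular equation itself.
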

The uniqueness part of this statement is already well-known and we have kept it in the statement for the readers' convenience. Indeed, if $\varrho$ is as in \eqref{cond.varrho}, uniqueness and comparison principle follow from \cite[Proposition 2.2]{Su02} (a result which is in fact originally proved in \cite{BKP85}). Thus, we will only prove in this paper the global existence and the finite speed of propagation, following basically by comparison with a suitably delayed version of the self-similar solution $U_*$.

The main contribution of this paper is the following large time behavior result.
\begin{theorem}\label{th.asympt}
Let $u_0$ be as in \eqref{icond} and compactly supported and $\varrho$ be a function satisfying \eqref{cond.varrho}. Let $u$ be the (unique) solution to the Cauchy problem \eqref{eq.gen}-\eqref{ic}. Then we have
\begin{equation}\label{asympt}
\lim\limits_{t\to\infty}t^{-\alpha}\|u(t)-V_*(t)\|_{\infty}=0, \quad V_*(x,t)=A^{1/(m-p)}U_*\big(x,A^{(m-1)/(m-p)}t\big),
\end{equation}
where $U_*$ and $\alpha$ are the unique self-similar solution and the exponent introduced in \eqref{SSS} and $A\in(0,\infty)$ is the constant in \eqref{cond.varrho}.
\end{theorem}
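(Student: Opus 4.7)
My approach is an $\omega$-limit / rescaling argument reducing Theorem \ref{th.asympt} to the asymptotic classification already obtained for Eq.~\eqref{eq.sing} in \cite[Theorem 2.11]{IL24}. I first introduce self-similar variables via
$$v(y,s) := e^{-\alpha s}\, u\big(e^{\beta s} y,\, e^{s}-1\big), \qquad s := \log(1+t),$$
with $\alpha,\beta$ as in \eqref{SSS}. Exploiting the scaling identities $\alpha - 1 = \alpha m - 2\beta$ and $\alpha(p-1) + 1 = -\beta\sigma$ (both consequences of \eqref{SSS}--\eqref{constL}), Eq.~\eqref{eq.gen} transforms into
$$v_s = \Delta_y v^m + \beta\, y\cdot\nabla_y v - \alpha v + W(y,s)\, v^p, \qquad W(y,s) := e^{-\beta\sigma s}\varrho(e^{\beta s}y).$$
Writing $\varrho(z) = A(1+|z|)^{\sigma}(1+o(1))$ as $|z|\to\infty$ and using $\beta>0$, one checks that $W(y,s)\to A|y|^{\sigma}$ as $s\to\infty$, locally uniformly on $\real^{N}\setminus\{0\}$. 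Theorem \ref{th.asympt} then amounts to proving $v(\cdot,s)\to \tilde f$ in $L^{\infty}$ as $s\to\infty$, where $\tilde f$ is the stationary profile of the formal limit equation corresponding to the rescaled solution $V_*$.

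\textbf{Uniform bounds and compactness.} To run the $\omega$-limit argument I need uniform $L^\infty$ and support bounds for $v(\cdot,s)$. Both follow by comparison: using Theorem \ref{th.fsp} and suitable delayed rescalings of $U_*$ as super- and sub-solutions, one exploits that $\varrho$ is globally bounded and asymptotically behaves as $A|x|^\sigma$ to sandwich $u(\cdot,t)$, for $t$ large, between two rescalings of $V_*$. This yields $\|v(\cdot,s)\|_\infty \leq C$ and $\mathrm{supp}\,v(\cdot,s)\subset B(0,R_0)$ for all $s\geq s_0$. Standard H\"older regularity for porous medium-type equations then provides equicontinuity, so that along any sequence $s_n\to\infty$ a subsequence of $v(\cdot,\cdot+s_n)$ converges locally uniformly on $\real^N\times\real$ to some $V\geq 0$ supported in $B(0,R_0)$. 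Passing to the limit in the weak formulation (the singular factor $|y|^{\sigma}$ is locally integrable since $\sigma>-N$), the limit $V$ is a global-in-time weak solution of
$$V_s = \Delta V^m + \beta\, y\cdot\nabla V - \alpha V + A|y|^{\sigma}V^p$$
on $\real^N\times\real$.

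\textbf{Identification and main obstacle.} Reversing the self-similar change of variables, $\tilde u(x,t):= t^{\alpha}V(xt^{-\beta},\log t)$ is a global solution on $(0,\infty)$ of $\tilde u_t = \Delta \tilde u^m + A|x|^{\sigma}\tilde u^p$; the algebraic rescaling used in \eqref{asympt} (dividing by $A^{1/(m-p)}$ and stretching time by $A^{(m-1)/(m-p)}$) then turns it into a solution of the singular equation \eqref{eq.sing}. By \cite[Theorem 2.11]{IL24} its self-similar profile converges to $U_*$ as $t\to\infty$, which in rescaled variables reads $V(\cdot,s)\to \tilde f$ as $s\to\infty$. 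Combined with the invariance of $\omega$-limit sets under the limit semi-flow, this universal attraction forces $V\equiv \tilde f$, independently of the subsequence; hence $v(\cdot,s)\to\tilde f$ in $L^\infty$, which is precisely \eqref{asympt}. The main obstacle is this identification step: one has to check that the limit $V(\cdot,0)$ satisfies the positivity near the origin required by the uniqueness/comparison theory on which \cite[Theorem 2.11]{IL24} relies, and also that the passage to the limit in the weak formulation indeed produces a bona fide solution of the limit equation despite the singular weight at $y=0$. A secondary technicality is the uniform support bound, since finite propagation in Eq.~\eqref{eq.gen} a priori only yields a support growing with $t$, and one must show this growth is at most of order $t^{\beta}$, which is exactly what the self-similar rescaling absorbs.
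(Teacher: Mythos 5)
There is a genuine gap, and it sits exactly at the step you dispatch in one sentence: the lower bound. You claim that $u$ can be sandwiched ``between two rescalings of $V_*$'' using ``suitable delayed rescalings of $U_*$ as super- and sub-solutions.'' The supersolution half is fine (since $\varrho(x)\leq c_2|x|^{\sigma}$, any solution of the singular-weight equation, suitably rescaled, lies above $u$). But no rescaling of $U_*$ is a subsolution of Eq.~\eqref{eq.gen} on all of $\real^N$: the rescaling $\lambda^{1/(m-1)}U_*(x,\lambda t)$ solves the equation with weight $c|x|^{\sigma}$, $c=\lambda^{(m-p)/(m-1)}$, and the required inequality $c|x|^{\sigma}\leq\varrho(x)$ fails near $x=0$ because $|x|^{\sigma}\to\infty$ (as $\sigma<0$) while $\varrho$ is bounded. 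The paper flags precisely this: every solution of \eqref{eq.sing} is a \emph{strict supersolution} of \eqref{eq.reg}, so it cannot be used from below. Without a correct lower bound with the grow-up rate $t^{\alpha}$, your $\omega$-limit argument does not exclude that the rescaled limit $V$ is identically zero, and the appeal to \cite[Theorem 2.11]{IL24} (whose uniqueness/comparison theory needs positivity near the origin) cannot be made. You do list ``positivity near the origin'' among the obstacles at the end, but you offer no mechanism to secure it.

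The paper's resolution of this is its main technical contribution and occupies most of Section \ref{sec.reg}: a self-similar \emph{subsolution} of \eqref{eq.sing} whose profile is supported in an annulus $[R_1,R_2]$ with $R_1>0$ is constructed by a phase-plane analysis (a heteroclinic connection between the critical points $Q_2$ and $Q_3$ of the associated dynamical system, Lemma \ref{lem.subs3}). Because its support stays away from the origin, the comparison $c|x|^{\sigma}\leq(1+|x|)^{\sigma}$ needed in Lemma \ref{lem.subs2} only has to hold for $|x|\geq R_1$, which is achievable by choosing $c$ small; combined with the porous-medium positivity-spreading lemma (Lemma \ref{lem.subs4}) and a comparison argument on the exterior region, this yields Proposition \ref{prop.subs} and the matching lower bound. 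The remainder of your outline (self-similar variables, compactness, passage to the limit equation with weight $A|y|^{\sigma}$, identification of the $\omega$-limit via \cite[Theorem 2.11]{IL24}) coincides in substance with the paper's use of the Galaktionov--V\'azquez S-theorem, so the architecture is right; what is missing is the one construction that makes the argument close.
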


Let us next discuss the most interesting points of Theorem \ref{th.asympt}. We observe that, for solutions to \eqref{eq.gen} with functions $\varrho$ satisfying \eqref{cond.varrho}, we establish in Theorem \ref{th.asympt} an interesting example of \emph{asymptotic simplification}. Indeed, we consider solutions to \eqref{eq.gen} with a bounded and continuous weight according to \eqref{cond.varrho} and we show that, as $t\to\infty$ and in the correct time scale, they approach a special solution to Eq. \eqref{eq.sing} which presents a coefficient featuring a singularity at $x=0$; for example, in the particular case $\varrho(x)=(1+|x|)^{\sigma}$ of Eq. \eqref{eq.reg}, we can just replace in the limit $(1+|x|)^{\sigma}$ by $|x|^{\sigma}$, despite the notable qualitative differences between these two weights. A similar asymptotic simplification has been discovered in the study of the non-homogeneous porous medium equation
\begin{equation}\label{nhpme}
\varrho(x)u_t=\Delta u^m, \quad \varrho(x)\sim|x|^{-\gamma}, \quad {\rm as} \ |x|\to\infty,
\end{equation}
in papers such as \cite{RV06, RV09} dealing with the range $\gamma\in(0,2)$, where the convergence towards self-similar profiles of the singular equation
$$
|x|^{-\gamma}u_t=\Delta u^m
$$
holds true uniformly in $\real^N$, and in \cite{KRV10} dealing with the range $\gamma>2$, where a similar convergence only holds true with further limitations for $\gamma$ and only in exterior sets of the form $|x|\geq\delta t^{\beta}$, $\delta>0$, $\beta>0$. However, Eq. \eqref{nhpme} appears to be of a different nature than Eq. \eqref{eq.gen}, thus it is an interesting fact that they share this special form of asymptotic simplification.

\medskip

\noindent \textbf{Open question.} The previous discussion is in our case limited to $\sigma$ satisfying \eqref{cond.sigma}, this limitation stemming from the existence of the self-similar solution $U_*$ to Eq. \eqref{eq.sing} presenting the correct time scale shared by Eq. \eqref{eq.gen}. Considering now $\sigma\leq-2$, Eq. \eqref{eq.gen} is still well-posed, since $\varrho\in L^{\infty}(\real^N)$ (and in particular for $\varrho(x)=(1+|x|)^{\sigma}$, the analysis in \cite{AdB91} covers the whole range $\sigma\in(-\infty,0)$ of negative exponents). However, it is likely that the equation with singular weight Eq. \eqref{eq.sing} has no bounded self-similar solutions (or even no bounded solutions at all), a fact established for $\sigma=-2$ in \cite{IS23}. Thus, letting $\sigma\leq-2$ in \eqref{cond.varrho}, the question of finding the profiles for the large time behavior of solutions seems to be a very interesting one.

\medskip

\noindent \textbf{Structure of the paper}. Apart from the Introduction, the paper is divided into two sections. The proof of both Theorem \ref{th.fsp} and \ref{th.asympt} for Eq. \eqref{eq.reg}, taking thus $\varrho(x)=(1+|x|)^{\sigma}$ as a significant example (whose functional analytic theory is available in \cite{AdB91}) on which it is easier to present the details of the proofs, is given in Section \ref{sec.reg}. As we shall see, the most difficult technical step in the proof is the construction of suitable subsolutions ensuring that the solutions do not collapse to zero as $t\to\infty$. The final Section \ref{sec.gen} contains the proof of Theorems \ref{th.fsp} and \ref{th.asympt} for general weights $\varrho$ satisfying \eqref{cond.varrho}, having as starting point the details of the proof given before for $\varrho(x)=(1+|x|)^{\sigma}$.

\section{Proofs of the main results with $\varrho(x)=(1+|x|)^{\sigma}$}\label{sec.reg}

Throughout this section we fix $\varrho(x)=(1+|x|)^{\sigma}$, that is, we work only with Eq. \eqref{eq.reg}, an equation whose (local in time) mathematical theory has been developed in \cite{AdB91}. The proof of Theorem \ref{th.fsp} follows partly from this theory and is given below.
\begin{proof}[Proof of Theorem \ref{th.fsp}]
Let $u_0$ be as in \eqref{icond}. Introducing the notation
$$
[[u_0]]:=\sup\limits_{R\geq1}R^{-2/(m-1)}\frac{1}{|B(0,R)|}\int_{B(0,R)}u_0(y)\,dy,
$$
and recalling that $\sigma<\sigma_*$ by \eqref{cond.sigma}, we are in the conditions of \cite[Theorem 3.1, p. 371]{AdB91} stemming that local existence of a solution to the Cauchy problem \eqref{eq.reg}-\eqref{ic} in $\real^N\times(0,T)$ is ensured for some $T>0$, provided that $[[u_0]]<\infty$. In our case, the latter holds true since $u_0\in L^{\infty}(\real^N)$, and we readily find that
$$
[[u_0]]\leq\sup\limits_{R\geq1}R^{-2/(m-1)}\|u_0\|_{\infty}=\|u_0\|_{\infty}.
$$
Moreover, the negativity of $\sigma$ entails that $(1+|x|)^{\sigma}\in L^{\infty}(\real^N)$ and the uniqueness and comparison principle follow from \cite[Proposition 2.2]{Su02}. Finally, let $\overline{u}$ be any solution to the Cauchy problem \eqref{eq.sing}-\eqref{ic}; indeed, such a solution exists globally according to \cite[Theorem 2.2]{IL24}, since $L<0$ and $u_0$ satisfies \eqref{icond}. We deduce from the negativity of $\sigma$ that
\begin{equation}\label{comp.sigma}
(1+|x|)^{\sigma}<|x|^{\sigma}, \quad x\in\real^N,
\end{equation}
hence $\overline{u}$ is a supersolution to \eqref{eq.reg} which is global in time. The comparison principle \cite[Proposition 2.2]{Su02} then entails that $u(t)\in L^{\infty}(\real^N)$ for any $t\in(0,\infty)$ and thus $u$ is a global solution.

It remains to prove the finite speed of propagation. Let thus $u_0$ be a compactly supported initial condition satisfying \eqref{icond}. Let
\begin{equation}\label{edge.supp}
\xi_0:=\sup\{\xi>0: f_*(\xi)>0\}\in(0,\infty)
\end{equation}
be the edge of the support of the profile $f_*$ of the compactly supported self-similar solution $U_*$ to Eq. \eqref{eq.sing} introduced in \eqref{SSS}, and
$$
R_0:=\sup\{|x|: x\in\real^N, u_0(x)>0\}\in(0,\infty)
$$
be the maximum amplitude of the positivity set of $u_0$. Then, for any $\tau>0$, the support of $U_*(\tau)$ is $B(0,\tau^{\beta}\xi_0)$. Noticing that
$$
\lim\limits_{\tau\to\infty}U_*(R_0,\tau)=\lim\limits_{\tau\to\infty}\tau^{\alpha}f_*(R_0\tau^{-\beta})=\infty, \quad \lim\limits_{\tau\to\infty}\tau^{\beta}\xi_0=\infty,
$$
we can choose $\tau_{\infty}>0$ such that
\begin{equation}\label{comp.above}
U_*(R_0,\tau_{\infty})\geq\|u_0\|_{\infty}, \quad \tau_{\infty}^{\beta}\xi_0>2R_0.
\end{equation}
Since $U_*$ is radially decreasing on its support, we readily infer from \eqref{comp.above} that $U_*(x,\tau_{\infty})\geq u_0(x)$ for any $x\in B(0,R_0)$ and, by the definition of $R_0$, that $U_*(\tau_{\infty})\geq u_0$ in $\real^N$. We next deduce from the inequality \eqref{comp.sigma} that the function $U_*(\cdot,\cdot+\tau_{\infty})$ with $\tau_{\infty}\in(0,\infty)$ constructed above is a supersolution to the Cauchy problem \eqref{eq.reg}-\eqref{ic}, and the comparison principle ensures that
$$
u(x,t)\leq U_*(x,t+\tau_{\infty}), \quad (x,t)\in\real^N\times(0,\infty),
$$
hence $u(t)$ remains compactly supported at any time $t>0$.
\end{proof}
We next go towards the proof of Theorem \ref{th.asympt} for solutions to Eq. \eqref{eq.reg}. As commented in the Introduction, this is a very interesting case of asymptotic simplification, and the main difficulty in the proof is to establish a uniform lower bound for $u(t)$, $t\in(0,\infty)$, matching with the same grow-up rate $t^{\alpha}$ as the claimed profile $U_*$ introduced in \eqref{SSS}. Observe that any solution to Eq. \eqref{eq.sing} is a \emph{strict supersolution} for Eq. \eqref{eq.reg}, thus we cannot directly employ such a solution to bound solutions to Eq. \eqref{eq.reg} from below. This fact reinforces the need to construct in a different (and more involved) way a subsolution to the Cauchy problem \eqref{eq.reg}-\eqref{ic} with the correct grow-up rate $t^{\alpha}$ as $t\to\infty$. This construction is more technical, and we need a number of preparatory lemmas for it. The first of them is related to Eq. \eqref{eq.sing}.
\begin{lemma}\label{lem.subs1}
If $v$ is a solution to \eqref{eq.sing} and $c>0$, then
\begin{equation*}
w_{c}(x,t):=\lambda_c^{1/(m-1)}v(x,\lambda_ct), \quad \lambda_c=c^{(m-1)/(m-p)}
\end{equation*}
is a solution to
\begin{equation}\label{eq.sing.c}
w_t=\Delta w^m+c|x|^{\sigma}w^p.
\end{equation}
\end{lemma}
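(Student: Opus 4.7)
The plan is to verify the identity by a direct substitution, using the scaling ansatz $w_c(x,t)=\lambda_c^{1/(m-1)}v(x,\lambda_c t)$ and checking that the factor $\lambda_c$ is chosen precisely to absorb the coefficient $c$ in front of the weight $|x|^\sigma$.

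First, I would compute the three terms of equation \eqref{eq.sing.c} in terms of $v$ evaluated at $(x,\lambda_c t)$. The time derivative gives $\partial_t w_c=\lambda_c^{1/(m-1)+1}v_t(x,\lambda_c t)=\lambda_c^{m/(m-1)}v_t(x,\lambda_c t)$. The diffusion term gives $\Delta w_c^m=\lambda_c^{m/(m-1)}\Delta v^m(x,\lambda_c t)$, and the reaction term gives $w_c^p=\lambda_c^{p/(m-1)}v^p(x,\lambda_c t)$. Then I would use the assumption that $v$ solves \eqref{eq.sing} at the point $(x,\lambda_c t)$, namely $v_t=\Delta v^m+|x|^\sigma v^p$, and substitute it into the expression for $\partial_t w_c$.

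After substitution, the identity $\partial_t w_c=\Delta w_c^m+c|x|^\sigma w_c^p$ reduces to the algebraic requirement $\lambda_c^{m/(m-1)}=c\,\lambda_c^{p/(m-1)}$, i.e. $\lambda_c^{(m-p)/(m-1)}=c$. Since $p<m$ so that $(m-p)/(m-1)\neq 0$, solving for $\lambda_c$ yields exactly $\lambda_c=c^{(m-1)/(m-p)}$, which is the chosen scaling. This closes the verification.

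There is no real obstacle in this lemma; it is a routine scaling calculation, and the only point requiring care is checking that the exponent $(m-p)/(m-1)$ in the defining relation for $\lambda_c$ is consistent with the assumption $1<p<m$, so that $\lambda_c$ is well defined and positive for every $c>0$. The same computation also shows the converse, namely that from a solution of \eqref{eq.sing.c} one recovers a solution of \eqref{eq.sing} by the inverse scaling, which will be useful when transferring properties (global existence, comparison, asymptotic profile) of Eq. \eqref{eq.sing} to the rescaled equation \eqref{eq.sing.c} appearing in the construction of the subsolution for Eq. \eqref{eq.reg}.
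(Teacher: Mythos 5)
Your proposal is correct and follows essentially the same direct scaling computation as the paper, with the same exponents and the same algebraic identity $\lambda_c^{(m-p)/(m-1)}=c$ fixing the time-rescaling factor. The only (minor) point the paper adds is the remark that this formal computation on classical solutions carries over to weak solutions by the same change of variables in the weak formulation \eqref{weaksol}.
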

\begin{proof}
We give a formal proof assuming that $v$ is a classical solution. By direct calculation, we have
\begin{equation*}
\begin{split}
&\partial_tw_c(x,t)=\lambda_c^{m/(m-1)}\partial_tv(x,\lambda_ct),\\
&\Delta w_c^m(x,t)=\lambda_c^{m/(m-1)}\Delta v^m(x,\lambda_ct),\\
&c|x|^{\sigma}w_c^p(x,t)=\lambda_c^{(m-p)/(m-1)}\lambda_c^{p/(m-1)}|x|^{\sigma}v^p(x,\lambda_ct)=\lambda_c^{m/(m-1)}|x|^{\sigma}v^p(x,\lambda_ct).
\end{split}
\end{equation*}
We thus find that
$$
\big(\partial_tw_c-\Delta w_c^m-c|x|^{\sigma}w_c^p\big)(x,t)=\lambda_c^{m/(m-1)}\big(\partial_tv-\Delta v^m-|x|^{\sigma}v^p\big)(x,\lambda_ct)=0,
$$
for any $(x,t)\in\real^N\times(0,\infty)$, which concludes the proof. If $v$ is only a weak solution, the proof is done in the same way on the weak formulation \eqref{weaksol} by changing variables in the integrals, and we omit here the details.
\end{proof}
\begin{lemma}\label{lem.subs2}
Let $w$ be a solution to \eqref{eq.sing.c} with $c\in(0,1)$. Then $w$ is a subsolution to Eq. \eqref{eq.reg} in the region
\begin{equation}\label{reg.c}
\left\{(x,t)\in\real^N\times(0,\infty): |x|\geq K(c):=\frac{1}{c^{1/\sigma}-1}\right\}.
\end{equation}
\end{lemma}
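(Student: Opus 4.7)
The plan is to verify directly that the subsolution inequality
$$
\partial_t w - \Delta w^m - (1+|x|)^{\sigma} w^p \leq 0
$$
holds pointwise in the region \eqref{reg.c}. Since $w$ is by hypothesis a (classical or weak) solution to Eq. \eqref{eq.sing.c}, one has $\partial_t w - \Delta w^m = c|x|^{\sigma} w^p$, so the above reduces to the purely algebraic inequality
$$
c|x|^{\sigma} w^p \leq (1+|x|)^{\sigma} w^p, \qquad \text{equivalently} \qquad c|x|^{\sigma} \leq (1+|x|)^{\sigma},
$$
on the set where $w>0$ (and trivially where $w=0$).

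The core step is then to find the set of $x\in\real^N$ on which the scalar inequality $c|x|^{\sigma}\leq (1+|x|)^{\sigma}$ holds. I would rewrite it, using that $|x|>0$, as
$$
c \leq \left(\frac{1+|x|}{|x|}\right)^{\sigma} = \left(1+\frac{1}{|x|}\right)^{\sigma}.
$$
Now I would use the sign of $\sigma$: since $\sigma<0$, the map $r\mapsto (1+1/r)^{\sigma}$ is increasing on $(0,\infty)$. Raising to the power $1/\sigma$ (which reverses the inequality, again because $\sigma<0$), the condition becomes $1+1/|x|\leq c^{1/\sigma}$. As $c\in(0,1)$ and $\sigma<0$, we have $c^{1/\sigma}>1$, so $c^{1/\sigma}-1>0$ and the condition rearranges exactly to
$$
|x|\geq \frac{1}{c^{1/\sigma}-1}=K(c),
$$
which is the definition of the region \eqref{reg.c}.

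Putting everything together, on the region $|x|\geq K(c)$ we have $c|x|^{\sigma}\leq(1+|x|)^{\sigma}$, hence
$$
\partial_t w - \Delta w^m - (1+|x|)^{\sigma} w^p = c|x|^{\sigma}w^p - (1+|x|)^{\sigma}w^p \leq 0,
$$
which is precisely the pointwise subsolution inequality. If $w$ is only a weak solution, the same comparison can be carried out after multiplying by a non-negative test function supported in the region \eqref{reg.c} and using the weak formulation \eqref{weaksol}. There is no genuine obstacle here: the argument is just an elementary monotonicity inspection, and the only subtle point is to track correctly the two reversals of inequality forced by the sign $\sigma<0$, which together produce the sharp threshold $K(c)$.
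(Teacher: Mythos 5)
Your proof is correct and follows essentially the same route as the paper's: both reduce the subsolution inequality to the pointwise bound $c|x|^{\sigma}\leq(1+|x|)^{\sigma}$ via the identity $w_t-\Delta w^m-(1+|x|)^{\sigma}w^p=[c|x|^{\sigma}-(1+|x|)^{\sigma}]w^p$, and then check that this scalar inequality is equivalent to $|x|\geq K(c)$ using the negativity of $\sigma$. You merely spell out the two inequality reversals in more detail than the paper does.
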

\begin{proof}
By direct calculation, we have
\begin{equation}\label{interm2}
w_t-\Delta w^m-(1+|x|)^{\sigma}w^p=\big[c|x|^{\sigma}-(1+|x|)^{\sigma}\big]w^p.
\end{equation}
Notice that the fact that $(x,t)$ belongs to the region \eqref{reg.c} implies that $|x|(c^{1/\sigma}-1)\geq1$, which is equivalent, taking into account the negativity of $\sigma$, to $c|x|^{\sigma}\leq (1+|x|)^{\sigma}$, whence the right hand side of \eqref{interm2} is non-positive, completing the proof.
\end{proof}
The next preparatory result is based on dynamical systems techniques developed in the previous paper \cite{IMS23}.
\begin{lemma}\label{lem.subs3}
There is at least one subsolution $\tilde{u}$ to Eq. \eqref{eq.sing} in the self-similar form
\begin{equation}\label{SSSub}
\tilde{u}(x,t)=t^{\alpha}f(|x|t^{-\beta}), \quad {\rm supp}\,f=[R_1,R_2], \quad R_2>R_1>0,
\end{equation}
with $\alpha$, $\beta$ defined in \eqref{SSS} and the function $f$ is a profile solving, on its positivity set, the differential equation \eqref{SSODE}.
\end{lemma}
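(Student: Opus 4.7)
The plan is to construct $\tilde u$ as a compactly supported self-similar (weak) solution of \eqref{eq.sing} whose support is bounded away from the spatial origin, so that the subsolution property follows automatically. Under the ansatz $\tilde u(x,t)=t^{\alpha}f(|x|t^{-\beta})$, the task reduces to exhibiting a profile $f$ that solves the ODE \eqref{SSODE} on a maximal positivity interval $[R_1,R_2]$ with $0<R_1<R_2<\infty$ and vanishes at both endpoints.

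I would invoke the phase-plane machinery developed in \cite{IMS23}, where \eqref{SSODE} is recast as an autonomous polynomial system via a logarithmic change of the independent variable together with suitable scaling substitutions involving $f$, $f^m$ and $\xi^{\sigma+2}$. The central profile $f_*$ from \eqref{SSS} corresponds, in that picture, to the orbit joining the critical point encoding the symmetry data $f(0)>0$, $f'(0)=0$ at $\xi=0$ to the invariant face $\{f=0\}$, whereas the strong non-uniqueness phenomenon from the zero initial datum cited in the Introduction reflects the existence of additional orbits whose whole $\xi$-range lies inside $(0,\infty)$ between two transversal crossings of $\{f=0\}$; any such orbit furnishes a profile $f$ with $\mathrm{supp}(f)=[R_1,R_2]$, $R_1>0$. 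A more pedestrian alternative is a shooting argument from an interior local maximum $f(\xi_0)=a>0$, $f'(\xi_0)=0$, exploiting that $\xi^{\sigma}f^p$ is a bounded, subcritical perturbation on any compact subinterval of $(0,\infty)$, so that a continuity argument in $(a,\xi_0)$ produces two interfaces $R_1<\xi_0<R_2$ in finite position.

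The subsolution property of $\tilde u$ extended by zero outside the annulus $\{R_1 t^{\beta}\le|x|\le R_2 t^{\beta}\}$ is then immediate: classical equality in \eqref{eq.sing} holds inside by construction, outside both sides vanish, and at the free boundaries $|x|=R_i t^{\beta}$ the balance in \eqref{SSODE} between the dominant terms $(f^m)''$ and $\beta\xi f'$ at a simple zero $\xi=R_i>0$ (the source $\xi^{\sigma}f^p$ being of higher order since $p>1$) enforces the Barenblatt-type behavior $f(\xi)\sim C|\xi-R_i|^{1/(m-1)}$, so that $(f^m)'(R_i^{\pm})=0$ and no distributional contribution to $\Delta\tilde u^m$ arises across the interfaces. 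Thus \eqref{weaksol} holds with equality against non-negative test functions, making $\tilde u$ in fact a weak solution of \eqref{eq.sing}, hence also a subsolution.

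The main obstacle is the dynamical-systems step: isolating, within the phase portrait of \cite{IMS23}, an orbit whose $\xi$-range is a compact subinterval of $(0,\infty)$ and whose endpoints are transversal exits through $\{f=0\}$ with the correct exponent $1/(m-1)$. Since this classification is precisely the content of the analysis carried out in \cite{IMS23}, the argument here essentially consists of invoking the relevant orbit from that reference and reading it off as a profile $f$ meeting the requirements of the lemma.
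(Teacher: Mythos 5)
Your overall strategy (self-similar ansatz, reduction to the profile ODE \eqref{SSODE}, phase-plane analysis in the spirit of \cite{IMS23}, annular support $[R_1,R_2]$) is the same as the paper's, but there are two genuine problems. First, your interface analysis is wrong, and with it the claim that $\tilde u$ is a weak \emph{solution}. The profiles produced by the phase-plane construction correspond to orbits joining the critical points at infinity $Q_2$ and $Q_3$, and these satisfy $(f^m)'(R_1^+)>0$ and $(f^m)'(R_2^-)<0$, i.e.\ $f\sim C(\xi-R_1)^{1/m}$ near $R_1$, not the Barenblatt behavior $C|\xi-R_i|^{1/(m-1)}$ you assert. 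Indeed, at the inner edge $R_1$ the dominant balance you invoke, $(f^m)''\approx-\beta\xi f'$, cannot hold: for a left interface with $f$ increasing into its support both $(f^m)''$ and $\beta\xi f'$ are positive, so they cannot cancel. This is consistent with the retention property of porous-medium--type equations: the support of a genuine weak solution cannot develop an expanding hole $B(0,R_1t^{\beta})$, so no weak solution with this support structure exists. What saves the lemma is precisely that the one-sided jumps of $(f^m)'$ at both $R_1$ and $R_2$ are \emph{positive}, so $\Delta\tilde u^m$ picks up nonnegative Dirac contributions on the interfaces and the zero extension is a subsolution (and only a subsolution). You reach the word ``subsolution'' at the end, but via an incorrect sign analysis and an overclaim (equality in \eqref{weaksol}) that is false.

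Second, you defer the existence of the relevant orbit entirely to \cite{IMS23}, but that reference only supplies the local analysis of the critical points $Q_2$ (unstable node, zeros with $(f^m)'>0$) and $Q_3$ (stable node, zeros with $(f^m)'<0$); it does not establish that an orbit connecting them exists. That connection is the actual content of the proof: after the substitution $W=XZ$ one works in the invariant plane $\{X=0\}$ of the reduced system \eqref{PSsystbis}, identifies the saddle $P_1=(-\beta/\alpha,0)$, shows its stable separatrix emanates from $Q_2$ by an isocline and Poincar\'e--Bendixson argument, and then shows that the trajectories leaving $Q_2$ above that separatrix must enter $Q_3$. Your ``pedestrian shooting alternative'' is too vague to substitute for this: you would need to prove that the solution issued from an interior maximum actually reaches zero on \emph{both} sides at finite $\xi$ with the stated transversality, which is exactly the nontrivial step. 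As written, the proposal has a gap at the central existence step and an error at the interface step.
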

\begin{proof}
We look for solutions (at least at points where $\tilde{u}>0$) to Eq. \eqref{eq.sing} in the form \eqref{SSSub}. It is a straightforward calculation to check that the profile $f$ satisfies on its positivity set the differential equation \eqref{SSODE}. The rest of the proof is based on the change of variable employed for the classification of self-similar solutions in \cite{IMS23}, together with a phase-plane analysis technique used therein, and we briefly recall it here for the sake of completeness. We set
\begin{equation}\label{PSchange}
X(\eta)=\frac{m}{\alpha}\xi^{-2}f(\xi)^{m-1}, \ \ Y(\eta)=\frac{m}{\alpha}\xi^{-1}f(\xi)^{m-2}f'(\xi), \ \ Z(\eta)=\frac{1}{\alpha}\xi^{\sigma}f(\xi)^{p-1},
\end{equation}
where the new independent variable $\eta$ is defined implicitly the differential equation
\begin{equation}\label{indep}
\frac{d\eta}{d\xi}=\frac{\alpha}{m}\xi f(\xi)^{1-m}.
\end{equation}
We deduce by direct calculations that \eqref{SSODE} is transformed, in variables $(X,Y,Z,\eta)$, into the autonomous dynamical system
\begin{equation}\label{PSsyst}
\left\{\begin{array}{ll}\dot{X}=X[(m-1)Y-2X],\\
\dot{Y}=-Y^2-\frac{\beta}{\alpha}Y+X-NXY-XZ,\\
\dot{Z}=Z[(p-1)Y+\sigma X],\end{array}\right.
\end{equation}
where the dot derivatives are taken with respect to the independent variable $\eta$. Its equilibrium points, either finite or at infinity (using the compactification of the phase space to the equator of the Poincar\'e hypersphere according to the theory exposed, for example, in \cite[Section 3.10]{Pe}), have been classified and analyzed carefully in \cite[Section 2]{IMS23} (finite points) and \cite[Section 3]{IMS23} (points at infinity). Among the critical points at infinity, we are interested in the ones denoted by $Q_2$ and $Q_3$ (in the notation of \cite{IMS23}), which encode the limits $Y\to\pm\infty$ in the phase space; that is, in a formal writing, the points containing the behavior of trajectories of the system \eqref{PSsyst} such that $(X,Y,Z)\to(0,\pm\infty,0)$ along them. Their analysis, performed in \cite[Lemma 3.4]{IMS23} following the details in \cite[Lemma 2.6]{IS21}, established that

$\bullet$ the critical point $Q_2$ corresponding to the limit $(X,Y,Z)\to(0,\infty,0)$ is an unstable node, and undoing the transformation \eqref{PSchange} on the trajectories of the system \eqref{PSsyst} going out of it, they are mapped into profiles $f$ such that there are $R_1\in(0,\infty)$ and $\delta>0$ with
\begin{equation}\label{interm3}
f(R_1)=0, \quad f(\xi)>0 \ {\rm for} \ \xi\in(R_1,R_1+\delta), \quad (f^m)'(R_1)>0.
\end{equation}

$\bullet$ the critical point $Q_3$ corresponding to the limit $(X,Y,Z)\to(0,-\infty,0)$ is a stable node, and undoing the transformation \eqref{PSchange} on the trajectories of the system \eqref{PSsyst} entering it, they are mapped into profiles $f$ such that there are $R_2\in(0,\infty)$ and $\delta\in(0,R_2)$ with
\begin{equation}\label{interm4}
f(R_2)=0, \quad f(\xi)>0 \ {\rm for} \ \xi\in(R_2-\delta,R_2), \quad (f^m)'(R_2)<0.
\end{equation}

We thus notice that a trajectory of the system \eqref{PSsyst} connecting $Q_2$ and $Q_3$ in the phase space associated to the system \eqref{PSsyst} corresponds to a profile $f$ matching both behaviors \eqref{interm3} and \eqref{interm4} at $\xi=R_1$, respectively at $\xi=R_2$. Hence ${\rm supp}\,f=[R_1,R_2]$ and we have our subsolution $\tilde{u}$ defined by \eqref{SSSub}. Observe that $\tilde{u}$ is in fact a solution to Eq. \eqref{eq.sing} on $(R_1,R_2)$ and a subsolution at its zeros $\xi=R_1$ and $\xi=R_2$ if extended by zero on both sides outside the interval $[R_1,R_2]$.

We are thus left to prove the existence of a trajectory connecting these critical points $Q_2$ and $Q_3$. To this end, we first perform a further change of variable in order to replace $Z$ by $W=XZ$ in the system \eqref{PSsyst}, corresponding in terms of profiles to
$$
W(\eta)=\frac{m}{\alpha^2}\xi^{\sigma-2}f(\xi)^{m+p-2},
$$
with $\eta$ introduced in \eqref{indep}. It is straightforward to check that the system \eqref{PSsyst} is transformed into
\begin{equation}\label{PSsystbis}
\left\{\begin{array}{ll}\dot{X}=X[(m-1)Y-2X],\\
\dot{Y}=-Y^2-\frac{\beta}{\alpha}Y+X-NXY-W,\\
\dot{W}=W[(m+p-2)Y+(\sigma-2)X],\end{array}\right.
\end{equation}
and the analysis of the critical points $Q_2$ and $Q_3$ remains unchanged. We establish next a trajectory connecting $Q_2$ to $Q_3$ in the invariant plane $\{X=0\}$, that is, in the phase plane associated to the reduced system
\begin{equation}\label{PSplane}
\left\{\begin{array}{ll}
\dot{Y}=-Y^2-\frac{\beta}{\alpha}Y-W,\\
\dot{W}=(m+p-2)YW,\end{array}\right.
\end{equation}
Observe that, in this system, the critical point $P_1=(-\beta/\alpha,0)$ is a saddle point. Indeed, the linearization of the system \eqref{PSplane} in a neighborhood of this point has the matrix
$$
M(P_1)=\left(
         \begin{array}{cc}
           \frac{\beta}{\alpha} & -1 \\[1mm]
           0 & -\frac{(m+p-2)\beta}{\alpha} \\
         \end{array}
       \right),
$$
with a one-dimensional unstable manifold included in the invariant $Y$-axis (as indicated by the eigenvector $e_1=(1,0)$) and a one-dimensional stable manifold $W_s(P_1)$ consisting in a single trajectory tangent to the eigenvector
$$
e_2=\left(1,\frac{(m+p-1)\beta}{\alpha}\right).
$$
We prove next that the unique trajectory in $W_s(P_1)$ arrives from the critical point $Q_2$. Consider the isocline $\dot{Y}=0$ of the system \eqref{PSplane}, that is,
\begin{equation}\label{isoY}
Y^2+\frac{\beta}{\alpha}Y+W=0, \quad {\rm with \ normal \ vector} \quad \overline{n}=\left(2Y+\frac{\beta}{\alpha},1\right).
\end{equation}
On the one hand, the direction of the flow of the system \eqref{PSplane} across the isocline \eqref{isoY} is given by the sign of the expression
$$
(m+p-2)\left(-Y^2-\frac{\beta}{\alpha}Y\right)Y\leq0, \quad Y\in\left[-\frac{\beta}{\alpha},0\right].
$$
On the other hand, the slope of the isocline \eqref{isoY} near $P_1$ is given by
$$
\frac{dW}{dY}\Big|_{Y=-\beta/\alpha}=\frac{\beta}{\alpha}<\frac{(m+p-1)\beta}{\alpha},
$$
the latter being the slope of the vector $e_2$ indicating the direction of the unique trajectory contained in $W_s(P_1)$. We infer from the direction of the flow across \eqref{isoY} that the trajectory contained in $W_s(P_1)$ lies forever in the region $\dot{Y}<0$. The inverse function theorem allows us then to parametrize this trajectory as a curve $W=W_1(Y)$, with derivative
\begin{equation}\label{interm9}
W_1'(Y)=-\frac{(m+p-2)W_1(Y)Y}{Y^2+(\beta/\alpha)Y+W_1(Y)},
\end{equation}
and it is easy to see that this curve cannot have a vertical asymptote $W_1(Y)\to\infty$ as $Y\to Y_0$ for some $Y_0\in(-\beta/\alpha,\infty)$; indeed, in such case, we would deduce from \eqref{interm9} that in a left neighborhood of such a point $Y_0$ we have
$$
|W_1'(Y)|\leq(m+p-2)|Y|
$$
thus the growth would be at most linear, contradicting the existence of the vertical asymptote. It then follows that the trajectory in $W_s(P_1)$ arrives from the half-plane $\{Y>0\}$, in which $W_1(Y)$ is decreasing with $Y$. Arguing by contradiction, we infer from an easy application of the Poincar\'e-Bendixon Theorem \cite[Section 3.7]{Pe} together with the non-existence of any finite critical point in the half-plane $\{Y>0\}$ that the unique trajectory in $W_s(P_1)$ arrives from $Q_2$. Since $Q_2$ is an unstable node and the trajectory $W_1(Y)$ is a separatrix, any trajectory going out of it into the region $\{W>W_1(Y)\}$ remains forever in the region $W>W_1(Y)$ for $Y\geq-\beta/\alpha$ and cannot enter $P_1$. Observe also that a blow-up of such trajectories as $Y\to Y_1$ for some $Y_1\in(0,\infty)$ is ruled out again by \eqref{interm9}. An easy argument by contradiction based on the monotonicity of both $W$ and $Y$ along such a trajectory, the non-existence of finite critical points in the region $\{-\infty<Y<-\beta/\alpha\}$ and the Poincar\'e-Bendixon Theorem gives that all these trajectories starting from $Q_2$ in the region $\{W>W_1(Y)\}$ must enter the stable node $Q_3$. The stability of $Q_2$ and $Q_3$ entails that there are other orbits of the phase space associated to the system \eqref{PSsystbis} not contained in the plane $\{X=0\}$ connecting them, as claimed. 
\end{proof}
Since the previous arguments can be easily followed with the help of a picture, we plot in Figure \ref{fig1} a typical phase portrait associated to the system \eqref{PSplane} according to the proof of Lemma \ref{lem.subs3}. Observe that the critical point $P_0=(0,0)$ appearing in the figure is not analyzed in the proof, as it is not needed for our goals.

\begin{figure}[ht!]
  \begin{center}
  \includegraphics[width=11cm,height=8cm]{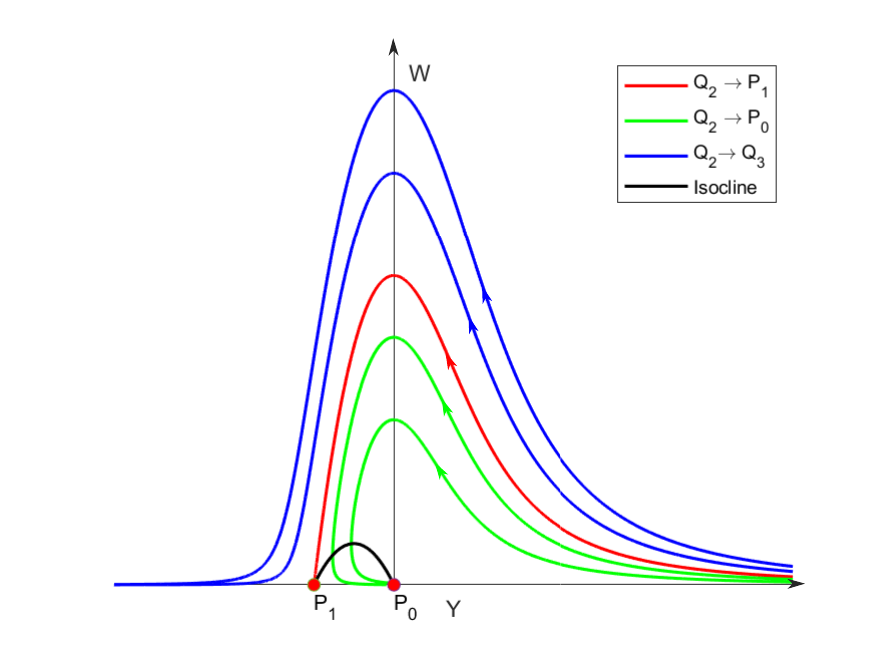}
  \end{center}
  \caption{The phase plane associated to the system \eqref{PSplane}. Experiment for $m=3$, $p=2$, $N=4$, $\sigma=-1.5$}\label{fig1}
\end{figure}

The last preparatory result stems from a very well known property of solutions to the porous medium equation, and we state and prove it here for the sake of completeness.
\begin{lemma}\label{lem.subs4}
Let $u_0$ be as in \eqref{icond} and $M>0$. Then, there exists $t_0>0$ such that the solution $u$ to the Cauchy problem \eqref{eq.reg}-\eqref{ic} satisfies $B(0,M)\subset{\rm supp}\,u(t_0)$.
\end{lemma}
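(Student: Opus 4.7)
The plan is to exploit the fact that the source term in \eqref{eq.reg} is non-negative, so that any solution of the pure porous medium equation $v_t = \Delta v^m$ is automatically a subsolution to \eqref{eq.reg}. By the comparison principle (already available from the uniqueness/comparison machinery invoked in the proof of Theorem \ref{th.fsp}), it suffices to exhibit a PME subsolution whose support at some time $t_0$ contains $B(0,M)$, and then the support of $u(t_0)$ automatically contains that ball as well.

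The natural candidate is the Barenblatt (source-type) self-similar solution to the PME, which I will denote schematically by $B(x,t;\mu)$: it has compact support in a ball of radius $c_{\mu} t^{k/N}$, with $k=1/(N(m-1)+2)$, that expands to cover all of $\real^{N}$ as $t \to \infty$. The first step is to use that $u_0 \in C(\real^{N})$ is non-negative and non-trivial to pick $x_0 \in \real^{N}$, $r>0$, $\eta>0$ such that $u_0 \geq \eta$ on $B(x_0,r)$. The second step is to choose the mass parameter $\mu$ and a small delay $\tau>0$ so that the profile $x\mapsto B(x-x_0,\tau;\mu)$ is supported in $B(x_0,r)$ and everywhere bounded above by $\eta$ on that ball; this is possible because both the height and the support radius of $B(\cdot,\tau;\mu)$ can be made as small as desired by first shrinking $\mu$ and then shrinking $\tau$. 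Thus $B(\,\cdot\,-x_0,\tau;\mu) \leq u_0$ in $\real^{N}$.

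The third step is to apply the comparison principle to the full equation \eqref{eq.reg}: since $B(\,\cdot\,-x_0,\,\cdot\,+\tau;\mu)$ is a solution to the PME and the weighted source in \eqref{eq.reg} is non-negative, this translated-in-time Barenblatt is a subsolution of \eqref{eq.reg}; therefore
\[
u(x,t) \geq B(x-x_0,t+\tau;\mu), \qquad (x,t)\in\real^{N}\times(0,\infty).
\]
In the fourth step, since the support of $B(\,\cdot\,,s;\mu)$ is a ball centered at $x_0$ whose radius tends to infinity as $s\to\infty$, one can pick $t_0>0$ so large that this support contains $B(0,M)$; then $B(0,M)\subset \mathrm{supp}\, B(\,\cdot\,-x_0,t_0+\tau;\mu)\subset \mathrm{supp}\, u(t_0)$, which is the desired conclusion.

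The only potentially delicate point is the construction in the second step, namely finding $\mu$ and $\tau$ so that the Barenblatt profile fits under the cap $\eta\chi_{B(x_0,r)}$; but this is a direct consequence of the explicit self-similar form of $B$ together with the fact that as $\tau\searrow 0$ (with $\mu$ fixed small) the support shrinks to $\{x_0\}$ while the total mass remains $\mu$, whereas the height can be balanced against $\eta$ by first fixing $\mu$ small. No nontrivial obstacle is expected beyond this elementary matching.
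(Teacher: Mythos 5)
Your argument is correct and rests on the same mechanism as the paper's: since the weighted source is non-negative, solutions of the pure porous medium equation are subsolutions of \eqref{eq.reg}, and the lemma reduces by comparison to the support-spreading property of the PME. The paper takes a shortcut here: it compares $u$ with the PME solution $w$ emanating from the \emph{same} datum $u_0$ (so $u\geq w$, since $u$ is a PME supersolution) and then simply cites the penetration property \cite[Proposition 9.19]{VPME}, which gives $w(\cdot,t_0)>0$ on $B(0,M)$ for $t_0$ large. You instead re-derive that penetration property from scratch by sliding an explicit Barenblatt profile under $u_0$ and letting its support expand; this is more self-contained, at the price of having to carry out the elementary matching step yourself. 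On that step, one correction: the justification in your final paragraph is stated backwards. Writing the Barenblatt as $\tau^{-a}\bigl(C-\kappa|x-x_0|^2\tau^{-2a/N}\bigr)_+^{1/(m-1)}$ with $a=N/(N(m-1)+2)$, its height at time $\tau$ is $\tau^{-a}C^{1/(m-1)}$, which \emph{blows up} as $\tau\searrow0$ at fixed mass (the profile concentrates to a Dirac mass), so you cannot make both the height and the support radius small by ``first shrinking $\mu$ and then shrinking $\tau$''. The fix is immediate and does not change the structure of your proof: fix $\tau=1$ and shrink $C$ (equivalently the mass), so that the height $C^{1/(m-1)}$ and the support radius $(C/\kappa)^{1/2}$ tend to zero simultaneously and the profile fits under $\eta\chi_{B(x_0,r)}$. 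With that adjustment, all four steps go through and the conclusion follows as you describe.
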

\begin{proof}
Let $w$ be the solution to the Cauchy problem
$$
w_t=\Delta w^m, \ {\rm in} \ \real^N\times(0,\infty), \quad w(x,0)=u_0(x), \ {\rm for} \ x\in\real^N.
$$
Standard results in the theory of the porous medium equation (see for example \cite[Proposition 9.19]{VPME}) show that there is $t_0>0$ such that $w(x,t_0)>0$ for any $x\in B(0,M)$. Since $u$ is a supersolution for the porous medium equation, the comparison principle then gives that $u(x,t_0)\geq w(x,t_0)>0$ in $B(0,M)$, completing the proof.
\end{proof}
We are now able to construct a subsolution to Eq. \eqref{eq.reg} that will be decisive in the proof of Theorem \ref{th.asympt} by preventing that the limit of $u(t)$ as $t\to\infty$, in the right time scale, can vanish identically.
\begin{proposition}\label{prop.subs}
Let $u_0$ be as in \eqref{icond} and $u$ be the solution to the Cauchy problem \eqref{eq.reg}-\eqref{ic}. Then there exist $\lambda_*\in(0,1)$, a subsolution $W_*$ to Eq. \eqref{eq.reg} in the self-similar form \eqref{SSSub} with a profile $f$ such that ${\rm supp}\,f=[R_1,R_2]\subset(0,\infty)$ and a time $t_0>0$ such that $W_*(x,t)\leq u(x,t+t_0)$ for any $t\geq1/\lambda_*$.
\end{proposition}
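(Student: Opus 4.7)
The plan is to build $W_*$ as a time-rescaled version of the self-similar subsolution $\tilde u$ from Lemma~\ref{lem.subs3}, via the chain of transformations given by Lemmas~\ref{lem.subs1} and~\ref{lem.subs2}, and then to secure the pointwise inequality $W_*(\cdot,1/\lambda_*)\leq u(\cdot,1/\lambda_*+t_0)$ at the starting time so that the comparison principle for Eq.~\eqref{eq.reg} propagates it to every later time.

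Starting from $\tilde u(x,t)=t^\alpha f(|x|t^{-\beta})$ with $\supp f=[R_1,R_2]$, for $c\in(0,1)$ I set $\lambda_c=c^{(m-1)/(m-p)}$ and
\[
W_c(x,t):=\lambda_c^{1/(m-1)}\tilde u(x,\lambda_ct)=t^\alpha F_c(|x|t^{-\beta}),\qquad F_c(\xi)=\lambda_c^{\alpha+1/(m-1)}f(\lambda_c^{-\beta}\xi),
\]
which is already in the self-similar form \eqref{SSSub} with profile $F_c$ supported in $[\lambda_c^\beta R_1,\lambda_c^\beta R_2]\subset(0,\infty)$. The scaling argument of Lemma~\ref{lem.subs1} applies verbatim to subsolutions, so $W_c$ is a subsolution to Eq.~\eqref{eq.sing.c}; Lemma~\ref{lem.subs2} (whose sign computation only uses the pointwise inequality $c|x|^\sigma\leq(1+|x|)^\sigma$) then promotes it to a subsolution of Eq.~\eqref{eq.reg} in $\{|x|\geq K(c)\}$. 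The annular positivity set of $W_c(\cdot,t)$ is contained in this region exactly when $t\geq t_c:=\lambda_c^{-1}(K(c)/R_1)^{1/\beta}$; extending $W_c$ by zero outside the annulus preserves the subsolution property (the positive jumps of $(W_c^m)'$ across the free boundary only help the distributional subsolution inequality), so $W_c$ becomes a global subsolution to Eq.~\eqref{eq.reg} on $\real^N\times[t_c,\infty)$.

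The quantitative engine is the asymptotic behavior as $c\to 0^+$: explicit computation gives $\|W_c(\cdot,t_c)\|_\infty\sim c^{2/(|\sigma|(m-p))}$, $\supp W_c(\cdot,t_c)\subset\overline{B(0,(R_2/R_1)K(c))}$ with $K(c)\to 0$, and $t_c\to\infty$. In parallel, combining Lemma~\ref{lem.subs4} (applied at $M=1$, which yields $T_\sharp>0$ and $\eta>0$ with $u(\cdot,T_\sharp)\geq\eta$ on $\overline{B(0,1/2)}$ by continuity) with the standard Barenblatt-type lower bound for the pure porous medium equation applied to the PME minorant $w\leq u$, one obtains constants $C,T_*>0$ with $u(x,t)\geq Ct^{-N/(N(m-1)+2)}$ on $\overline{B(0,1)}$ for $t\geq T_*$. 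A direct comparison of exponents shows $2/(|\sigma|(m-p))>2(p-1)N/(|\sigma|(m-p)(N(m-1)+2))$ (equivalent to $N(m-p)>-2$, which is automatic), so for $c\in(0,1)$ small enough the PME-type lower bound on $u$ at time $t_c+1$ strictly dominates $\|W_c(\cdot,t_c)\|_\infty$.

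To conclude, I fix a small $c\in(0,1)$ additionally satisfying $t_c>\max(1,T_*)$ and $\supp W_c(\cdot,t_c)\subset\overline{B(0,1)}$, and set $W_*:=W_c$, $\lambda_*:=1/t_c\in(0,1)$, and $t_0:=1$. The pointwise inequality $W_*(x,1/\lambda_*)\leq u(x,1/\lambda_*+t_0)$ holds trivially off the support of $W_*(\cdot,t_c)$ and inside by the exponent balance above; the comparison principle for Eq.~\eqref{eq.reg} applied to the subsolution $W_*$ and the solution $(x,s)\mapsto u(x,s+t_0)$ on $\real^N\times[1/\lambda_*,\infty)$ then gives $W_*(x,t)\leq u(x,t+t_0)$ for all $t\geq 1/\lambda_*$. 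The main obstacle is precisely the exponent-matching in the previous paragraph — making the subsolution $W_c$ small enough at its earliest admissible time $t_c$ to fit below the time-decaying Barenblatt-type lower bound on $u$, a delicate balance that relies essentially on the hypothesis $p<m$.
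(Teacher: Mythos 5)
Your construction is correct, and it differs from the paper's proof precisely at the one step that carries all the difficulty: securing the initial ordering between the rescaled annular subsolution and $u$. The paper keeps the geometry fixed: it starts the comparison at time $1/\lambda_*$, where the rescaled subsolution has support exactly $\{R_1\leq|x|\leq R_2\}$, invokes Lemma \ref{lem.subs4} only qualitatively to obtain $h:=\min_{B(0,R_2)}u(\cdot,t_0)>0$, and then shrinks the amplitude by imposing $\lambda_*\leq(h/H)^{m-1}$, together with a second constraint forcing $K(\lambda_*^{(m-p)/(m-1)})\leq R_1$ --- the latter playing the role of your containment of the positivity annulus in $\{|x|\geq K(c)\}$. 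No decay rate for anything is needed. You instead send $c\to0$, which shrinks the support of $W_c(\cdot,t_c)$ into a ball of radius $O(K(c))\to0$ at the cost of pushing the admissible starting time $t_c\to\infty$, and you then must beat the amplitude $c^{2/(|\sigma|(m-p))}$ against a quantitative Barenblatt-type lower bound $u(\cdot,t)\geq Ct^{-N/(N(m-1)+2)}$ on a fixed ball; I checked your asymptotics for $K(c)$, $t_c$ and $\|W_c(\cdot,t_c)\|_\infty$ and the resulting exponent inequality, which indeed reduces to $N(m-p)+2>0$ and always holds. The extension-by-zero and comparison steps are the same as in the paper. The trade-off is that your route needs an extra quantitative input (the PME lower bound, which you should cite, e.g.\ from \cite{VPME}, together with the PME comparison giving $w\leq u$) and a favorable sign in an exponent balance, whereas the paper's amplitude-shrinking argument is softer and shorter; on the other hand, your version produces the statement literally with $t_0=1$ and makes explicit how small the subsolution must be taken. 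One bookkeeping remark: in the paper, $\lambda_*$ is simultaneously the time-rescaling parameter of $W_*$ and the reciprocal of the starting time, and this identification is used later when the profile $g_*(\xi)=\lambda_*^{1/(m-1)}f(\lambda_*^{-\beta}\xi)$ is written out in the proof of Theorem \ref{th.asympt}; your $\lambda_*=1/t_c$ is decoupled from the rescaling parameter $\lambda_c$, which satisfies the Proposition as stated but would require minor adjustments downstream.
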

\begin{proof}
Let $v$ be a self-similar subsolution to Eq. \eqref{eq.sing} in the form \eqref{SSSub} with a profile $f$ such that ${\rm supp}\,f=[R_1,R_2]\subset(0,\infty)$, as given by Lemma \ref{lem.subs3}. We infer from Lemma \ref{lem.subs4} that there is $t_0>0$ such that $B(0,R_2)\subset{\rm supp}\,u(t_0)$. We introduce the following extremal values:
$$
h:=\min\limits_{x\in B(0,R_2)}u(x,t_0)>0, \quad H:=\max\limits_{x\in\real^N}v(x,1)=\max\limits_{\xi\in[R_1,R_2]}f(\xi)
$$
and
\begin{equation}\label{interm5}
\lambda_*:=\min\left\{\left(\frac{h}{H}\right)^{m-1},\left(\frac{1+R_1}{R_1}\right)^{\sigma(m-1)/(m-p)}\right\}.
\end{equation}
Since $1<p<m$, the negativity of $\sigma$ entails that $\lambda_*\in(0,1)$. Let us define then
$$
W_*(x,t)=\lambda_*^{1/(m-1)}v(x,\lambda_*t),
$$
which is a subsolution (and in fact a solution at points where $W_*>0$, according to Lemma \ref{lem.subs1}) to the equation
$$
W_t=\Delta W^m+\lambda_*^{(m-p)/(m-1)}|x|^{\sigma}W^p.
$$
We then deduce from \eqref{interm5} that
\begin{equation}\label{interm6}
\max\limits_{x\in\real^N}W_*\left(x,\frac{1}{\lambda_*}\right)=\lambda_*^{1/(m-1)}\max\limits_{x\in\real^N}v(x,1)=\lambda_*^{1/(m-1)}H\leq h=\min\limits_{x\in B(0,R_2)}u(x,t_0).
\end{equation}
Moreover, since $\lambda_*\in(0,1)$, we infer from Lemma \ref{lem.subs2} that $W_*$ is a subsolution to Eq. \eqref{eq.reg} in the region
$$
\left\{(x,t)\in\real^N\times(0,\infty): |x|>K(\lambda_*^{(m-p)/(m-1)})=\frac{1}{\lambda_*^{(m-p)/\sigma(m-1)}-1}\right\}.
$$
Noticing that the definition \eqref{interm5} of $\lambda_*$ easily leads to $K(\lambda_*^{(m-p)/(m-1)})\leq R_1$, we infer that $W_*$ is a subsolution to Eq. \eqref{eq.reg} for $|x|\geq R_1$, $t>0$. Recalling that
$$
{\rm supp}\,W_*\left(\frac{1}{\lambda_*}\right)={\rm supp}\,v(1)={\rm supp}\,f=[R_1,R_2],
$$
the self-similar form of $W_*$ readily implies that $W_*(x,t)=0$ for $t>1/\lambda_*$ and $x\in B(0,R_1)$ (the lower edge of the support advances in the sense that its absolute value increases with time). We deduce from this fact and \eqref{interm6} that, on the one hand, $W_*(x,t)=0\leq u(x,t_0+t)$ for $t>1/\lambda_*$, $|x|=R_1$, and on the other hand, $W_*(x,1/\lambda_*)\leq u(x,t_0)$ for any $x\in\real^N$. The comparison principle then entails that
$$
W_*(x,t)\leq u\left(x,t+t_0-\frac{1}{\lambda_*}\right), \quad (x,t)\in\real^N\times\left(\frac{1}{\lambda_*},\infty\right),
$$
completing the proof.
\end{proof}
We are now in a position to advance towards the proof of the large time behavior \eqref{asympt} for solutions to Eq. \eqref{eq.reg}. To this end, pick $u$ to be the solution to Eq. \eqref{eq.reg} with initial condition $u_0$ as in \eqref{icond}. We next introduce the self-similar variables
\begin{equation}\label{SSvar}
v(y,s)=t^{-\alpha}u(x,t), \quad y=xt^{-\beta}, \quad s=\ln\,t, \quad (x,t)\in\real^N\times[1,\infty),
\end{equation}
and it follows by direct calculation that
\begin{equation*}
\begin{split}
&\partial_tu(x,t)=\alpha t^{\alpha-1}v(y,s)-\beta t^{\alpha-1}y\cdot\nabla v(y,s)+t^{\alpha-1}\partial_sv(y,s),\\
&\Delta u^m(x,t)=t^{m\alpha-2\beta}\Delta v^m(y,s),\\
&(1+|x|)^{\sigma}u^p(x,t)=t^{p\alpha}(1+t^{\beta}|y|)^{\sigma}v^p(y,s)=t^{p\alpha+\beta\sigma}(e^{-\beta s}+|y|)^{\sigma}v^p(y,s).
\end{split}
\end{equation*}
Recalling the values of $\alpha$ and $\beta$ in \eqref{SSS}, we readily deduce that
$$
\alpha-1=m\alpha-2\beta=p\alpha+\beta\sigma,
$$
whence we are left with the equation satisfied by $v$ in the $(y,s)$ variables, namely
\begin{equation}\label{eq.reg.SS}
\partial_sv=\Delta v^m+\beta y\cdot\nabla v-\alpha v+(e^{-\beta s}+|y|)^{\sigma}v^p.
\end{equation}
Since $t\to\infty$ translates into $s=\ln\,t\to\infty$, we observe from \eqref{eq.reg.SS} that the asymptotic simplification claimed in the statement of Theorem \ref{th.asympt} takes place at a formal level, due to the positivity of $\beta$. We make this convergence rigorous in the forthcoming lines, by employing as theoretical argument the stability theorem for dynamical systems known in literature as \emph{the S-theorem} and introduced by Galaktionov and V\'azquez in \cite{GV91} (see also the monograph \cite{GV03}). For the reader's convenience, before going into the details of the proof, we recall below the theoretical framework of the S-theorem following \cite{GV91}. In an abstract setting, we consider two dynamical systems given respectively by the \emph{limit equation} and the \emph{perturbed equation}
\begin{equation}\label{lim.eq}
w_t=W(w), \quad {\rm respectively} \quad v_t=V(t,v),
\end{equation}
and assume that the following three conditions are fulfilled:

$\bullet$ (H1) We consider a class $\mathcal{S}$ of solutions $v\in C([0,\infty),X)$ of the perturbed equation with values in a complete metric space $X$. Assume that $\{v(t)\}_{t>0}$ is relatively compact in $X$, for any $v\in\mathcal{S}$.

$\bullet$ (H2) Given a solution $v\in\mathcal{S}$ of the perturbed equation, and assuming that, for a sequence $\{t_j\}_{j\geq1}$ with $t_j\to\infty$, $v(t+t_j)$ converges (in the topology of the metric space $X$) to a function $w(t)$ as $j\to\infty$, then $w$ is a solution to the limit equation.

$\bullet$ (H3) The $\omega$-limit set $\Omega$ of the limit equation in $X$, that is, the set of functions $f\in X$ such that there is a solution $w$ to the limit equation and a sequence $\{t_j\}_{j\geq1}$ with $t_j\to\infty$ and $w(t_j)\to f$ as $j\to\infty$ (in the topology of $X$) is nonvoid, compact, and uniformly stable in the sense that for any $\epsilon>0$, there is $\delta=\delta(\epsilon)>0$ such that for any solution $w$ to the limit equation with $d(w(0),\Omega)\leq\delta$, then $d(w(t),\Omega)\leq\epsilon$ for any $t>0$.

Under these circumstances, the $\omega$-limit sets of solutions to the perturbed equation $v_t=V(t,v)$ in the class $\mathcal{S}$ are contained in $\Omega$. In other words, the asymptotic limits of solutions to $v_t=V(t,v)$ are contained among the asymptotic limits of solutions to the limit equation $w_t=W(w)$. With these preliminaries, we can give the proof of Theorem \ref{th.asympt} for Eq. \eqref{eq.reg}.

\begin{proof}[Proof of Theorem \ref{th.asympt}]
Let $u$ be the solution to the Cauchy problem \eqref{eq.reg}-\eqref{ic} with $u_0$ as in the statement of Theorem \ref{th.asympt} and let $(v,y,s)$ defined in \eqref{SSvar}. Our goal is to show that the $\omega$-limit set as $s\to\infty$ of the solutions to the perturbed (non-autonomous with respect to the time variable) equation \eqref{eq.reg.SS} coincides with the $\omega$-limit set of the solutions to the autonomous (with respect to the time variable) limit equation
\begin{equation}\label{eq.sing.SS}
\partial_sv=\Delta v^m+\beta y\cdot\nabla v-\alpha v+|y|^{\sigma}v^p,
\end{equation}
which, by undoing the self-similar change of variable \eqref{SSvar}, readily leads to Eq. \eqref{eq.sing}. Before passing to check the hypothesis of application of the S-theorem, we translate into self-similar variables the upper and lower bounds obtained by comparison with the sub- and supersolutions we have already constructed.

On the one hand, as shown in the proof of Theorem \ref{th.fsp}, there is $\tau_{\infty}\in(0,\infty)$ such that $u(x,t)\leq U_{*}(x,t+\tau_{\infty})$ for $(x,t)\in\real^N\times(0,\infty)$, where $U_*$ is the unique compactly supported self-similar solution to Eq. \eqref{eq.sing} introduced in \eqref{SSS}. In the self-similar variables \eqref{SSvar}, this upper bounds writes
$$
v(y,s)\leq \left(\frac{t+\tau_{\infty}}{t}\right)^{\alpha}f_*\left(|y|\left(\frac{t}{t+\tau_{\infty}}\right)^{\beta}\right),
$$
where $f_*$ is the self-similar profile of $U_*$ according to \eqref{SSS}, or, equivalently,
\begin{equation}\label{upper}
v(y,s)\leq \overline{U}_*(y,s):=\big(1+\tau_{\infty}e^{-s}\big)^{\alpha}f_*\big(|y|(1+\tau_{\infty}e^{-s})^{-\beta}\big), \quad (y,s)\in\real^N\times(0,\infty),
\end{equation}
which also gives that the support of $v$ is localized; more precisely
\begin{equation}\label{interm7}
{\rm supp}\,v(s)\subset B(0,\zeta_0), \quad \zeta_0:=(1+\tau_{\infty})^{\beta}\xi_0, \quad s\in(0,\infty),
\end{equation}
where $\xi_0$ is the edge of the support of $f_*$ defined in \eqref{edge.supp}. On the other hand, we have shown in Proposition \ref{prop.subs} that the subsolution $W_*$ constructed throughout this section satisfies $W_*(x,t)\leq u(x,t+t_0)$ for any $t\geq1/\lambda_*$ and $x\in\real^N$; this is equivalent to $W_*(x,t-t_0)\leq u(x,t)$ for any $t\geq t_0+1/\lambda_*$ and $x\in\real^N$. Translating this bound into the self-similar variables \eqref{SSvar} reads
$$
v(y,s)\geq\left(\frac{t-t_0}{t}\right)^{\alpha}g_*\left(|y|\left(\frac{t-t_0}{t}\right)^{-\beta}\right),
$$
where $g_*$ is the profile of the subsolution $W_*$, defined as
$$
g_*(\xi)=\lambda_*^{1/(m-1)}f(\lambda_*^{-\beta}\xi),
$$
with $f$ being the profile of the subsolution introduced in Lemma \ref{lem.subs3} and $\lambda_*\in(0,1)$ defined in the proof of Proposition \ref{prop.subs}. The previous estimate writes equivalently as
\begin{equation}\label{lower}
\begin{split}
v(y,s)\geq \overline{W}_*(y,s)&:=\big(1-t_{0}e^{-s}\big)^{\alpha}g_*\big(|y|(1-t_0e^{-s})^{-\beta}\big), \quad (y,s)\in\real^N\times(s_0,\infty),\\ &s_0:=\ln\left(t_0+\frac{1}{\lambda_*}\right).
\end{split}
\end{equation}
Let us now check the three hypothesis of the S-theorem, for which we consider the functional space
$$
X:=\{f\in L^{\infty}(\real^N)\cap C(\real^N): \overline{W}_*(y,s_0)\leq f(y)\leq\overline{U}_*(y,0), \ y\in\real^N\}
$$
and the class $\mathcal{S}$ of solutions to Eq. \eqref{eq.reg.SS} with values in $X$ at any time $s\geq s_0$. Observe that \eqref{upper}, \eqref{lower}, the monotonicity with respect to $s$ of $\overline{U}_*$ (which follows readily from the fact that $f_*$ is a decreasing profile on its support) and the evolution with respect to $s$ of the amplitude and support of $\overline{W}_*$, entail that $v(s)\in X$ for any $v$ solution to Eq. \eqref{eq.reg.SS} with compactly supported data and $s\geq s_0$, that is, $v\in\mathcal{S}$.

Since $\sigma>\max\{-N,-2\}$ and $|y|\leq\zeta_0$ by \eqref{interm7}, we deduce that
$$
(e^{-\beta s}+|y|)^{\sigma}v^p<|y|^{\sigma}v^p\in L^q(\real^N), \quad 1\leq q<\frac{N}{|\sigma|}, \quad s>0,
$$
thus we can choose $q\in(N/2,N/2|\sigma|)$ such that $N/2q<1$ and the conditions in \cite[Section 2]{DiB83} are then easily satisfied for Eq. \eqref{eq.reg.SS}. We thus deduce the compactness of the orbits $\{v(s)\}_{s>0}$ following from \cite[Proposition 1]{DiB83}, which confirms the hypothesis $(H_1)$ of the S-theorem. Moreover, if $\{v(s+s_j)\}_{j\geq1}$ is a uniformly convergent subsequence of functions to another function $v_*\in C([0,\infty);X)$ as $j\to\infty$, then
$$
(e^{-\beta (s+s_j)}+|y|)^{\sigma}v^p(y,s+s_j)\to |y|^{\sigma}v_*^p(y,s), \quad {\rm as} \ j\to\infty,
$$
and the upper bound \eqref{upper} together with the integrability of $\overline{U}_*$ and the dominated convergence theorem imply that the limit $v_*$ is a solution to Eq. \eqref{eq.sing.SS}, which means that the hypothesis $(H_2)$ of the S-theorem is satisfied. Finally, it follows from \cite[Theorem 2.11]{IL24} and its proof that the $\omega$-limit set of the orbits of solutions to Eq. \eqref{eq.sing.SS} belonging to $C([0,\infty);X)$ is the singleton $\{f_*\}$, thus, the hypothesis $(H_3)$ in the S-theorem is trivially fulfilled. By applying the S-theorem, we conclude that the $\omega$-limit set of orbits of solutions $u\in C([0,\infty);X)$ to Eq. \eqref{eq.reg.SS} is contained in the $\omega$-limit set of orbits of solutions $u\in C([0,\infty);X)$ to Eq. \eqref{eq.sing.SS}, that is, in $\{f_*\}$. The proof is completed by undoing the change of variable \eqref{SSvar}.
\end{proof}

\section{Proofs of the main results for general weights $\varrho(x)$}\label{sec.gen}

Throughout this section, we fix $\varrho$ as in \eqref{cond.varrho} and $u_0$ as in \eqref{icond}. We deduce from the positivity, continuity and boundedness of $\varrho$ that there are positive constants $c_1$, $c_2$ such that
\begin{equation}\label{equiv}
c_1(1+|x|)^{\sigma}\leq\varrho(x)\leq c_2(1+|x|)^{\sigma}\leq c_2|x|^{\sigma}, \quad x\in\real^N.
\end{equation}
These bounds are fundamental for the proofs.
\begin{proof}[Proof of Theorem \ref{th.fsp}]
Since $\varrho$ is bounded and continuous, the local well posedness follows from the classical theory (see for example \cite[Chapter V.8]{LSU}). Moreover, the inequalities \eqref{equiv} ensure that solutions to \eqref{eq.sing.c} with $c=c_2$ are supersolutions to Eq. \eqref{eq.gen} with $\varrho$ as above. Global existence and finite speed of propagation follow thus from the rescaling in Lemma \ref{lem.subs1} and the comparison principle given in \cite[Proposition 2.2]{Su02}.
\end{proof}
We next prove the large time behavior, completing thus our analysis. This proof is based on the techniques already employed in the study of Eq. \eqref{eq.reg} in the previous section, thus we will skip some details.
\begin{proof}[Proof of Theorem \ref{th.asympt}]
In the notation of Lemma \ref{lem.subs1}, the inequalities \eqref{equiv} and the comparison principle ensure on the one hand that there is $\tau_{\infty}>0$ such that
\begin{equation}\label{upper.gen}
u(x,t)\leq (U_*)_{c_2}(x,t+\tau_{\infty})=\lambda_{c_2}^{1/(m-1)+\alpha}(t+\tau_{\infty})^{\alpha}f_{*}(|x|\lambda_{c_2}^{-\beta}(t+\tau_{\infty})^{-\beta}),
\end{equation}
for any $(x,t)\in\real^N\times(0,\infty)$. On the other hand, we infer from \eqref{equiv}, Lemma \ref{lem.subs1}, Proposition \ref{prop.subs} and the comparison principle that
\begin{equation}\label{lower.gen}
u(x,t+t_0)\geq (W_*)_{c_1}(x,t)=\lambda_{c_1}^{1/(m-1)+\alpha}W_{*}(x,\lambda_{c_1}t), \quad (x,t)\in\real^N\times\left(\frac{1}{\lambda_*},\infty\right),
\end{equation}
where $\lambda_*$ is chosen as in Proposition \ref{prop.subs}. Notice that the bounds \eqref{upper.gen} and \eqref{lower.gen} already prove that the grow-up rate as $t\to\infty$ of the solution to the Cauchy problem \eqref{eq.gen}-\eqref{ic} is $t^{\alpha}$. We thus pass again to self-similar variables \eqref{SSvar}. Since $u_t$ and $\Delta u^m$ change in the same way as in Section \ref{sec.reg}, the only variation appears in the rescaling of the source term in \eqref{eq.gen}. We have
\begin{equation*}
\begin{split}
\varrho(x)u^p(x,t)&=t^{p\alpha}\varrho(t^{\beta}y)v^p(y,s)=t^{p\alpha}(1+t^{\beta}|y|)^{\sigma}\frac{\varrho(t^{\beta}y)}{(1+t^{\beta}|y|)^{\sigma}}v^p(y,s)\\
&=t^{p\alpha+\beta\sigma}(e^{-s\beta}+|y|)^{\sigma}\frac{\varrho(e^{s\beta}y)}{(1+e^{s\beta}|y|)^{\sigma}}v^p(y,s),
\end{split}
\end{equation*}
whence Eq. \eqref{eq.gen} writes in the self-similar variables $(y,s)$ defined in \eqref{SSvar} as follows:
\begin{equation*}
\partial_sv=\Delta v^m+\beta y\cdot\nabla v-\alpha v+(e^{-\beta s}+|y|)^{\sigma}\frac{\varrho(ye^{\beta s})}{(1+|y|e^{\beta s})^{\sigma}}v^p(y,s), \quad (y,s)\in\real^N\times(0,\infty).
\end{equation*}
Assuming that $u_0$ is compactly supported, the estimates \eqref{upper.gen} and \eqref{lower.gen} are readily transformed in variables $(y,s)$ into very similar bounds as \eqref{upper} and \eqref{lower} (varying by a rescaling constant related to $\lambda_{c_i}$, $i=1,2$). This allows us to apply once more the S-theorem in the same way as in the final part of Section \ref{sec.reg}, taking into account that \eqref{cond.varrho} implies that
$$
\lim\limits_{s\to\infty}(e^{-\beta s}+|y|)^{\sigma}\frac{\varrho(ye^{\beta s})}{(1+|y|e^{\beta s})^{\sigma}}=A|y|^{\sigma}.
$$
Thus, the asymptotic simplification now leads to the autonomous (with respect to the time variable $s$) equation
\begin{equation}\label{eq.gen.limit}
\partial_sv=\Delta v^m+\beta y\cdot\nabla v-\alpha v+A|y|^{\sigma}v^p, \quad (y,s)\in\real^N\times(0,\infty).
\end{equation}
The rescaling in Lemma \ref{lem.subs1} gives that the solutions to Eq. \eqref{eq.gen.limit} are in a one-to-one correspondence to the solutions to Eq. \eqref{eq.sing}, and we deduce from \cite[Theorem 2.11]{IL24} that the $\omega$-limit set of solutions to Eq. \eqref{eq.gen.limit} is the singleton
$$
\{A^{1/(m-p)}f_*\big(A^{(m-1)/(m-p)}\xi\big)\}.
$$
An application of the S-theorem similar to the one at the end of Section \ref{sec.reg} completes the proof.
\end{proof}

\bigskip

\noindent \textbf{Acknowledgements} R. G. I. and A. S. are partially supported by the Grants PID2020-115273GB-I00 and RED2022-134301-T (Spain). M. L. is partially supported by the Grant PID2022-136589NB-I00, all grants funded by MCIN/AEI/10.13039/501100011033.

\bigskip

\noindent \textbf{Data availability} Our manuscript has no associated data.

\bigskip

\noindent \textbf{Conflict of interest} The authors declare that there is no conflict of interest.

\bibliographystyle{plain}

\end{document}